\documentclass[12pt]{amsart}
\usepackage{amsmath, amsfonts, amsthm, amssymb}  % Some math symbols
\usepackage[margin=30mm]{geometry}

\usepackage{graphicx}
%\usepackage{tikz}
%\usetikzlibrary{decorations,arrows,shapes}

\usepackage{etoolbox}
\usepackage{enumerate}
\usepackage{listings}
\usepackage{booktabs}
\usepackage{float}

\newcommand{\bone}{\mathbb{I}}

\newcommand{\E}{\operatorname{\mathbb{E}}}
\renewcommand{\P}{\operatorname{\mathbb{P}}}

\newcommand{\Z}{\mathbb{Z}}
\newcommand{\calF}{\mathcal{F}}

\newcommand{\calH}{\mathcal{H}}

\newcommand{\ol}{\overline}

\newcommand{\eps}{\varepsilon}

\def\indented#1{\list{}{}\item[]}
\let\indented=\endlist

\newtheorem{theorem}{Theorem}[section]
\newtheorem{lemma}[theorem]{Lemma}

\theoremstyle{definition}

\newtheorem{problem}[theorem]{Problem}

\newtheorem{conjecture}[theorem]{Conjecture}
\newtheorem{remark}[theorem]{Remark}

\numberwithin{equation}{section}

\begin{document}

\title{On the size of earthworm's trail}

\author{Krzysztof Burdzy, Shi Feng and Daisuke Shiraishi}

\email{burdzy@uw.edu, fengshi@uw.edu}
\address{Department of Mathematics, University of Washington, Seattle,
WA 98195}
\email{shiraishi@acs.i.kyoto-u.ac.jp}
\address{Department of Advanced Mathematical Sciences, Graduate School of Informatics, Kyoto University, Kyoto, 606-8501}

\thanks{KB’s research was supported in part by Simons Foundation Grants 506732 and 928958. DS is supported by a JSPS Grant-in-Aid for Early-Career Scientists, 18K13425 and JSPS KAKENHI Grant Number 17H02849, 18H01123, 21H00989, 22H01128 and 22K03336.}

\keywords{Earthworm, random walk}

 \subjclass[2020]{60K40, 60J99} 

%\date{January 2023}

\maketitle

\begin{abstract}
    We investigate the number of holes created by an ``earthworm'' moving on the two-dimensional integer lattice. 
    The earthworm is modeled by a simple random walk. 
    At the initial time, all vertices are filled with grains of soil except for the position of the earthworm.
    At each step, the earthworm pushes the soil in the direction of its motion. 
    It leaves a hole (an empty vertex with no grain of soil) behind it.
    If there are holes in front of the earthworm (in the direction of its step),
    the closest hole is filled with a grain of soil. Thus the number of holes increases by 1 or remains unchanged
    at every step. We  show that the number of holes is at least $\mathcal{O}(n^{3/4})$ after $n$ steps.
    
\end{abstract}

\section{Introduction}

We will investigate the number of holes created by an ``earthworm'' moving on the two-dimensional integer lattice. 
    The earthworm is modeled by a simple random walk. 
    At the initial time, all vertices of $\Z^2$ are filled with grains of soil except for the position of the earthworm.
    At each step, the earthworm pushes the soil in the direction of its motion. 
    It leaves a hole (an empty vertex with no grain of soil) behind it.
    If there are holes in front of the earthworm (in the direction of its step),
    the closest hole is filled with a grain of soil.
See Section \ref{model} for the rigorous definition.

An earthworm model very similar to ours was  investigated in \cite{burdzy2013fractal}
except that the state space was a two-dimensional discrete torus. In that paper,
the number of holes was constant but their distribution changed over time and 
converged to the stationary regime. The ``dimension'' of the set of holes
was investigated---the definition was based on the local scaling properties
of the set of holes. 
A similar definition of the ``dimension'' of the set of holes was adopted
in \cite{wxml} but the state space was the whole $\Z^2$. In both cases, no theorems were proved---only the results
of simulations were reported. The dimension of the set of holes was close
to $3/2$ but appeared to be strictly larger than $3/2$.

In this article, we will prove a theorem supporting the conjecture that
the dimension of the set of holes is at least $3/2$ but our interpretation
of the ``dimension'' will be different. Namely, 
we will  show that the number of holes is at least $\mathcal{O}(n^{3/4})$ after $n$ steps
with high probability. See Theorem \ref{f24.1} for a precise statement.
Note that the exponent $3/4$ in our theorem is consistent with the ``dimension'' $3/2$,
in the sense that after $n^2$ steps, the random walk will visit about
$n^2$ vertices, on the same order as the number of sites in a square
with side $n$, and then the number of holes will be at least $\mathcal{O}(n^{3/2})$.

The model appears
to be very hard to analyze rigorously despite its simple nature.
We hope that our result, simulations, and open problems will inspire other
researchers.

The model and our main result will be presented in Section \ref{model}. 
This will be followed by Section \ref{proofs} containing the proofs.
Finally, Section \ref{open} will present some open problems and simulations.

After submitting the paper for publication, we learned from the anonymous Referee that our main result is very close to \cite[Lem. 2]{BenWil}. This is because ``tan points'' introduced in \cite{BenWil} are essentially the same as points $B_n^{\text{right}}$ defined in Section \ref{proofs} below.
In view of this preexisting research, the contributions of the present paper
include the rigorous introduction of the earthworm model and Conjecture \ref{con34} which, in our opinion, is challenging but not impossible to resolve. The proof of our main result,  Theorem \ref{f24.1} seems to be considerably simpler than that of \cite[Lem. 2]{BenWil}. The latter was based on hard estimates from \cite{BMS}. Our Theorem \ref{f24.1} is slightly stronger than \cite[Lem. 2]{BenWil} because $\eps>0$ in the statement of our theorem can be arbitrarily small.

\section{Model, notation and the main result}\label{model}

The earthworm is represented by a  simple random walk $X_n$ on $\mathbb{Z}^2$, starting at  $X_0 = (0,0)$. At every time $n$, every lattice point is in one of two states---either it is a hole or it is filled with a grain of soil. At time $n=0$,  $(0,0)$ is a hole and every other site is filled. Let $\mathcal{H}_n$ denote the set of holes at time $n$. Then $\mathcal{H}_0 = \{(0,0)\}$.

The process $(X_n,\calH_n) $ is Markov with the following dynamics.
Suppose $X_n = (x_n,y_n)$ and the earthworm goes to the right at the next step, i.e., $X_{n+1} = (x_n+1,y_n)$. We need to check if there is a hole to the right of $X_n$. Let 
\begin{align*}
    \mathcal{X}_n^{\text{right}} = \{(x,y_n) : x>x_n \}.
\end{align*}
If $\mathcal{X}_n^{\text{right}} \cap \mathcal{H}_n = \emptyset$ then we say that the earthworm created a hole at position $X_{n+1}=(x_n+1,y_n)$, and we let $\mathcal{H}_{n+1} = \mathcal{H}_{n} \cup \{X_{n+1}\}$. If $\mathcal{X}_n^{\text{right}} \cap \mathcal{H}_n \neq \emptyset$ and $X_{n+1}$ is not a hole then we let the earthworm create a hole at $X_{n+1}$,  push the soil in front of it,  and eliminate the nearest hole to the right of $X_{n+1}$. 
If $\mathcal{X}_n^{\text{right}} \cap \mathcal{H}_n \neq \emptyset$ and $X_{n+1}$ is a hole then no holes are created or annihilated.
More precisely, let $(x_*,y_n) \in \mathcal{X}_n^{\text{right}} \cap \mathcal{H}_n$ be the element with the smallest $x$-coordinate. Then let
\begin{align*}
    \mathcal{H}_{n+1} = (\mathcal{H}_{n} \backslash \{(x_*,y_n)\}) \cup \{X_{n+1}\}.
\end{align*}
In this case, we also say that the earthworm transferred the hole from $(x_*,y_n)$ to $X_{n+1}$. 

If the earthworm goes in any other direction at time $n+1$, the mechanism is the same with respect to that direction---check if there are any holes in front of the earthworm and update holes accordingly.
This completes the definition of the process $(X_n,\calH_n) $.

 Let  $\mathcal{F}_i$ be the $\sigma$-field generated by $X_0,X_1,...,X_i$. 
 Note that $\calH_i$ is $\calF_i$-measurable for every $i$.

The indicator random variable of an event $A$ will be denoted $\bone(A)$.
 
 Let $H_i$ be the indicator of the event that  the earthworm created an extra hole
 (i.e., that the number of holes increased by 1) at time $i$
 for $i\geq 1$. Set $H_0 = 1$. Let $S_n$  be the total number of holes at time $n$. Then,
\begin{align*}
    S_n = |\mathcal{H}_n| = \sum_{i=0}^{n} H_i.
\end{align*}

The goal of this paper is to provide a lower bound for $S_n$.  The following is our main result.
\begin{theorem}\label{f24.1}
For every $\eps>0$ there exists $\delta>0$ such that
\begin{align*}
    \liminf_{n\to\infty}\P(S_n \geq \delta n^{3/4}) \geq 1-\eps.
\end{align*}
\end{theorem}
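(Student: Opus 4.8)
My plan is to reduce the problem to a lower bound on the number of ``record steps'' of the underlying random walk, and then to analyse that count one horizontal line at a time.

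\emph{Step 1 (combinatorial reduction).} A hole is born at time $i$ whenever the earthworm moves, say, to the right and there is no hole strictly to the right of $X_{i-1}$ on the horizontal line through $X_{i-1}$. Since every hole sits at a previously visited vertex, i.e.\ $\calH_m\subseteq\{X_0,\dots,X_m\}$ directly from the update rule, a sufficient condition for $H_i=1$ is that the step at time $i$ is to the right and no vertex of $\mathcal X_{i-1}^{\text{right}}$ was visited before time $i$; call such an $i$ a \emph{rightward record step}, and define leftward, upward and downward record steps analogously. Because $S_n=\sum_{i=0}^n H_i$ and distinct record steps occur at distinct times,
\[
 S_n\ \ge\ \#\{\,1\le i\le n:\ i\text{ is a record step}\,\},
\]
and the right-hand side depends on the walk alone. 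Using the four lattice symmetries and the independence of the direction of the $(k{+}1)$-st step from $\calF_k$, its mean equals $\sum_{k=0}^{n-1}\P(X_k\text{ is a rightward record on its line})$, where ``rightward record'' means $x_j\le x_k$ for all $j<k$ with $y_j=y_k$; moreover a lower tail bound for the number of such $k$ transfers, through a routine binomial argument over those independent direction choices, to a lower tail bound for $S_n$.

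\emph{Step 2 (mean lower bound via horizontal lines).} Fix a line $\{y=b\}$ and list the $x$-coordinates $\xi^b_1,\xi^b_2,\dots$ of the walk at its successive visits to it. By the strong Markov property at those visits, $(\xi^b_r)_r$ is a random walk whose increments are symmetric under $x\mapsto-x$, and a rightward record on line $b$ is precisely a running maximum of $(\xi^b_r)_r$. By the Sparre--Andersen identity, $\P(\xi^b_r\text{ is a running maximum})$ is of order $r^{-1/2}$ (symmetry forces positivity parameter $1/2$, whatever the tail of the increments), so the expected number of records on line $b$ among the first $L_b$ visits is of order $\sqrt{L_b}$, where $L_b=\#\{i\le n:y_i=b\}$ is the occupation time of level $b$ by the vertical coordinate. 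Since $\sum_b L_b=n$, one has the elementary bound $\sum_b\sqrt{L_b}=\sum_b L_b/\sqrt{L_b}\ge n/\sqrt{\max_b L_b}$, while $\E[\sum_b\sqrt{L_b}]\gtrsim n^{3/4}$ (for instance, each level $|b|\le\eps_0\sqrt n$ is visited $\Omega(\sqrt n)$ times with probability bounded away from $0$). Hence the number of records has mean $\asymp n^{3/4}$, and by Step 1, $\E S_n\ge c\,n^{3/4}$. (Equivalently, reversing time shows that ``$X_k$ is a rightward record on its line'' has the same probability as a planar simple random walk avoiding a fixed half-line for $k$ steps, which is $\gtrsim k^{-1/4}$ by a soft barrier estimate --- the ``tan point'' viewpoint of the Referee's remark.)

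\emph{Step 3 (from the mean to high probability).} Condition on the whole trajectory of the vertical coordinate. Then $\max_b L_b$ is the maximal local time of a one-dimensional random walk run for $n$ steps, so both it and the range of the vertical coordinate are $O(\sqrt n)$ off an event of probability $<\eps$; on the complement the bound of Step 2 gives $\E[\#\text{records}\mid\text{vertical path}]\ge c\lambda^{-1/2}n^{3/4}$ for a large constant $\lambda$. To turn this into a statement about the count $\sum_b\rho_b$ (with $\rho_b$ the number of records on line $b$) we bound its conditional variance. The within-line part is negligible: for a single walk the covariance of ``running maximum at step $r$'' and ``running maximum at step $s>r$'' is $\asymp r^{-1/2}(s-r)^{-1/2}$, and $\sum_b\sum_{r<s\le L_b}r^{-1/2}(s-r)^{-1/2}\lesssim\sum_b L_b=n=o(n^{3/2})$. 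The between-line part is the delicate point: $\rho_b$ and $\rho_{b'}$ are correlated only through vertical excursions long enough to bridge the gap $|b-b'|$, and such an excursion has probability $\asymp 1/|b-b'|$, so one expects $\operatorname{Cov}(\rho_b,\rho_{b'}\mid\text{vertical path})\lesssim\sqrt{L_bL_{b'}}\,/\,|b-b'|$, whose double sum is $o(n^{3/2})$ on $\{\max_b L_b\le\lambda\sqrt n\}$. Chebyshev's inequality then gives $\sum_b\rho_b\ge\tfrac12 c\lambda^{-1/2}n^{3/4}$ with conditional probability tending to $1$, hence unconditionally with probability $\ge1-\eps$ once $\lambda$ is large; feeding this through Step 1 yields $\delta=\delta(\eps)>0$ with $\P(S_n\ge\delta n^{3/4})\ge1-\eps$ for all large $n$.

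\emph{Main obstacle.} The hard point is the between-line covariance estimate in Step 3: one must quantify how strongly the record counts on distinct horizontal lines are coupled through the single two-dimensional path and verify that the total correlation is of smaller order than $n^{3/2}\asymp(\E\#\text{records})^2$. This comes down to tail bounds for one-dimensional return times --- how often and how far the vertical excursions travel --- together with the typical horizontal displacement accumulated during an excursion. If one is content with $S_n\ge\delta n^{3/4}$ holding merely with some fixed positive probability, Step 3 can be replaced by the crude bound $\E[(\#\text{records})^2]\le C(\E\#\text{records})^2$ and the Paley--Zygmund inequality; it is the refined variance estimate that upgrades ``positive probability'' to the arbitrarily small $\eps$ in the statement.
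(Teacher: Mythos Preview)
Your Steps 1 and 2 are essentially the paper's Lemma~3.1: the time-reversal / half-line-avoidance formulation you give at the end of Step~2 is exactly the paper's argument, and it correctly delivers $\E S_n\gtrsim n^{3/4}$. The divergence, and a genuine gap, is in Step~3.

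You yourself flag the obstacle: the between-line bound $\operatorname{Cov}(\rho_b,\rho_{b'}\mid\text{vertical path})\lesssim\sqrt{L_bL_{b'}}/|b-b'|$ is asserted but not proved, and it is not routine. Conditionally on the vertical path, the increment sequences $(\xi^b_r)_r$ and $(\xi^{b'}_r)_r$ are built from overlapping blocks of the same horizontal $\pm1$ steps; turning ``an excursion from line $b$ reaches line $b'$ with probability $\asymp 1/|b-b'|$'' into a covariance bound for \emph{running-maximum counts} requires a stability estimate for records under correlated perturbations of increments, and your sketch does not supply one. Without it Chebyshev is unavailable, and you are left with only the Paley--Zygmund conclusion (fixed positive probability) that you mention in your last paragraph.

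The paper sidesteps this entirely by working with the hole process rather than the record lower bound. A one-line monotone coupling (erase all holes at time $i$ except the one at $X_i$ and continue; the resulting hole set stays inside the true one forever after) yields both $\E[H_j\mid\calF_i]\le\E[H_{j-i}]$ and $\P(S_j\le x\mid\calF_i)\le\P(S_{j-i}\le x)$. The first gives $\E[S_n^2]\le 2(\E S_n)^2$ and hence Paley--Zygmund with probability $\ge 1/8$. The second then upgrades ``positive probability'' to ``$1-\eps$'' by a short bootstrap: split $[0,n]$ into $m$ equal blocks; since $S$ is nondecreasing and, by the conditional inequality, each block has probability $\ge 1/8$ of pushing $S$ above $\E[S_{n/m}]/2$ regardless of the past, the chance that all $m$ blocks fail is at most $(7/8)^m$. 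No covariance computation beyond the crude second-moment bound is needed. Your record-count route cannot use this restart trick, because a ``fresh'' record step after time $t_{i-1}$ need not create a new hole (an old hole from before $t_{i-1}$ may lie ahead), so the block counts do not lower-bound $S_n$; the monotonicity is a property of $\calH$, not of records.
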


\section{Proofs}\label{proofs}

We will first derive a lower bound for the expectation of $S_n$ using  a Beurling-type estimate for random walks. Then we will give an upper bound for the variance of $S_n$. 
We will combine these estimates using the ``second-moment method'' and the associated Paley-Zygmund inequality.
Finally, we will show that $S_n$ cannot fluctuate too much to fall below the order of its expectation.

\begin{lemma}\label{f25.1}
There exists $c_1>0$ such that
\begin{align*}
    \liminf_{n\to\infty}\E\left[n^{-3/4} S_n\right] \geq c_1.
\end{align*}
\end{lemma}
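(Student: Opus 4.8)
The plan is to show that the expected number of holes after $n$ steps is at least of order $n^{3/4}$ by identifying, at each time $i$, a geometric configuration of the random walk path that \emph{forces} a new hole to be created, and then estimating the probability of that configuration using a Beurling-type (or gambler's-ruin) estimate for the simple random walk.

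First I would isolate a sufficient condition for $H_i=1$. Observe that when the earthworm takes a step to the right from $X_{i} = (x_i,y_i)$ to $X_{i+1} = (x_i+1,y_i)$, a new hole is created precisely when there is no hole in $\mathcal{X}_i^{\text{right}}$. A clean way to guarantee this is to ask that the walk has never before visited any site $(x,y_i)$ with $x > x_i$ on that horizontal line; more robustly, one asks that $X_i$ is a ``right-tan point'' of the path up to time $i$, meaning the half-line $\{(x,y_i): x\ge x_i\}$ has not been touched by $X_0,\dots,X_i$, and then the next step goes right. (These are the points called $B_n^{\text{right}}$ in the introduction, the ``tan points'' of \cite{BenWil}.) I would define $B_i^{\text{right}}$, $B_i^{\text{left}}$, $B_i^{\text{up}}$, $B_i^{\text{down}}$ as the events that the corresponding half-line from $X_i$ is avoided by the path so far, note that on $B_i^{\text{right}} \cap \{X_{i+1}=X_i+(1,0)\}$ we have $H_i = 1$ (and similarly in the other three directions), and hence
\begin{align*}
    \E[H_i \mid \calF_i] \;\ge\; \tfrac14\left(\bone(B_i^{\text{right}}) + \bone(B_i^{\text{left}}) + \bone(B_i^{\text{up}}) + \bone(B_i^{\text{down}})\right).
\end{align*}
Summing over $i$ and taking expectations reduces the lemma to showing $\sum_{i=0}^n \P(B_i^{\text{right}}) \gtrsim n^{3/4}$, and by the up--down--left--right symmetry of the simple random walk it suffices to lower-bound $\P(B_i^{\text{right}})$ for a positive fraction of $i\le n$.

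The core estimate is then: $\P(B_i^{\text{right}}) \gtrsim i^{-1/4}$. By reversing time, $B_i^{\text{right}}$ is the event that a random walk of length $i$ started at the origin stays in the half-plane $\{x \le 0\}$ relative to its endpoint... more precisely, after a time-reversal, $\P(B_i^{\text{right}})$ equals the probability that a simple random walk of length $i$ never enters the open right half-line through its starting point along the horizontal axis through the starting point --- this is a one-sided constraint on the $x$-coordinate conditioned on the $y$-coordinate returning, and it is a classical fact (Beurling-type / ballot-type estimate) that such a probability decays like $i^{-1/4}$: the $x$-coordinate is a lazy walk that must stay below a moving barrier, contributing $i^{-1/2}$ on the diffusive scale, but one gains because the constraint is only active when $y$ is near a particular value. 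I would cite or reproduce the relevant discrete Beurling estimate (e.g.\ from Lawler's book, or the gambler's-ruin computation for the two coordinates) to get $\P(B_i^{\text{right}}) \ge c\, i^{-1/4}$ for all large $i$. Then $\sum_{i=1}^n \P(B_i^{\text{right}}) \ge c \sum_{i=1}^n i^{-1/4} \sim \tfrac{4c}{3} n^{3/4}$, giving $\E[S_n] \ge \E\big[\sum_i H_i\big] \ge \tfrac14 \cdot 4 \cdot c\, n^{3/4}(1+o(1))$, i.e.\ the claimed $\liminf \E[n^{-3/4}S_n] \ge c_1$.

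\textbf{The main obstacle} I anticipate is making the passage from ``$H_i=1$'' to the clean avoidance event $B_i^{\text{right}}$ airtight: the dynamics of $\calH_n$ are genuinely complicated (holes are transferred, not just created, and a hole created long ago on line $y_i$ could in principle still sit to the right of $X_i$), so I must verify that the \emph{path-avoidance} event really does rule out a hole being present in $\mathcal{X}_i^{\text{right}}$ --- the key point being that every hole in $\calH_i$ other than possibly the ``current'' one lies at a site the walk has visited, so if the half-line has never been visited it contains no hole. A secondary technical point is getting the exact order $i^{-1/4}$ (rather than, say, $i^{-1/4}\log i$) in the Beurling-type bound uniformly in $i$; handling the laziness/parity of the coordinate walks and the conditioning on the vertical coordinate cleanly is where the real work lies, but this is exactly the classical estimate that the authors allude to, so I would lean on the existing literature for the sharp form.
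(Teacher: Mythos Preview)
Your proposal is correct and follows essentially the same route as the paper: the key observation that a hole can only sit at a previously visited site, the definition of the avoidance events $B_i^{\text{dir}}$, the symmetry reduction to $\P(B_i^{\text{right}})$, the time-reversal, and the citation of the Beurling-type estimate from Lawler yielding $\P(B_i^{\text{right}})\gtrsim i^{-1/4}$, followed by summation. Aside from a harmless off-by-one in the indexing of $H_i$ versus $H_{i+1}$ and the half-line being $\{x>x_i\}$ rather than $\{x\ge x_i\}$, your argument matches the paper's proof.
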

\begin{proof}
We start with two observations that are crucial for the proof. First, the earthworm creates a new hole at time $n+1$, i.e., $H_{n+1} = 1$, if and only if there is no hole in  the direction of its motion at time $n$. Second, for any  $(x,y) \in \mathbb{Z}^2$, if the earthworm did not visit $(x,y)$  before or at time $n$ then $(x,y)$ is not a hole at time $n$. Therefore
\begin{align}\label{f25.2}
    \{& H_{n+1}=1\} 
    = \{\text{There is no hole in the direction of the step at time } n\}\\ 
    &\supseteq \{\text{The earthworm did not visit any points}\notag\\
&\qquad \text{ in the direction of the step before or at time } n\}.
    \notag
\end{align}
We will define the last event in a more precise way and estimate its probability. 

Denote  $A_{n+1}^{\text{right}} = \{X_{n+1}-X_{n} = (1,0)\}$, i.e., $A_{n+1}^{\text{right}} $ is the event that $X$ goes to the right  at $(n+1)$st step. We will use a similar notation: $A_{n+1}^{\text{left}},A_{n+1}^{\text{up}},A_{n+1}^{\text{down}}$. For $i = \{\text{right, left, up, down}\}$,
\begin{align}\label{f25.3}
    \E[\bone(A_{n+1}^i)\mid \mathcal{F}_n] = 1/4.
\end{align}
Let $B_n^{\text{right}} = \{X_k-X_n \neq (a,0) \text{ for all } 0\leq k\leq n \text{ and } a \geq 1\}$, i.e., 
$B_n^{\text{right}}$ is the event that the random walk did not visit any point to the right of $X_n$ before time $n$. 
We will use the analogous notation: 
$B_{n}^{\text{left}},B_{n}^{\text{up}},B_{n}^{\text{down}}$. Note that $B_{n}^i \in \mathcal{F}_n$ for $i = \{\text{right, left, up, down}\}$.
The event on the right hand side of \eqref{f25.2} can be expressed as the union of four events as follows,
\begin{align*}
    \bigcup_{i=\{\text{left,right,up,down}\}} A_{n+1}^i \cap B_n^i.
\end{align*}
Therefore, conditioning on $\mathcal{F}_n$, applying \eqref{f25.2} and \eqref{f25.3} and using the symmetry of simple random walk, we obtain
\begin{align}\label{f25.4}
    \E[H_{n+1}]
    &\geq \sum_{i=\{\text{left,right,up,down}\}} \E[\bone(A_{n+1}^i) \bone(B_n^i)]\\
    &= \sum_{i=\{\text{left,right,up,down}\}}  \E[\bone(B_n^i)]/4
    = \E[\bone(B_n^{\text{right}})].\notag
\end{align}

For a fixed $n$, let $Y_i = X_{n-i}-X_n$ for $i=0,1,\dots, n$, and note that $Y$ is a simple random walk starting from $Y_0=(0,0)$. 
We denote $C_n^{\text{right}} = \{Y_k \neq (a,0) \text{ for all } 0\leq k\leq n \text{ and } a\geq 1\}$, i.e., $C_n^{\text{right}}$ is the event that $Y_0,\dots,Y_n$ did not visit the positive $x$-axis. Note that
$B_n^{\text{right}} = C_n^{\text{right}}$.
By \cite[(2.35)]{Lawler} there exists a constant $c_*>0$ such that
\begin{align}\label{m4.1}
    \liminf_{n\rightarrow \infty} n^{1/4}\E[\bone(C_n^{\text{right}})] > c_*.
\end{align}
This, the equality of events $B_n^{\text{right}} $ and $ C_n^{\text{right}}$, and \eqref{f25.4} imply that
\begin{align*}
\liminf_{n\rightarrow \infty} n^{1/4}\E[H_{n+1}] 
\geq \liminf_{n\rightarrow \infty} n^{1/4}\E[\bone(B_n^{\text{right}})]
  =  \liminf_{n\rightarrow \infty} n^{1/4}\E[\bone(C_n^{\text{right}})] > c_*.
\end{align*}
Therefore, using the integral approximation, for some $c_1 >0$,
\begin{align*}
    \liminf_{n\rightarrow \infty}\E\left[n^{-3/4}S_n\right] 
    &= \liminf_{n\rightarrow \infty} n^{-3/4} \sum_{i=1}^{n} \E[H_{i}] 
    \geq \lim_{n\rightarrow \infty} n^{-3/4} \int_{1}^{n} c_* x^{-1/4}\;dx
    = c_1 .
\end{align*}
\end{proof}

To bound the variance of $S_n$, we need to bound $\E[H_iH_j]$. Inequality \eqref{f25.6} in the following lemma will be used to derive an upper bound. Inequality \eqref{f25.7} will be used later.
\begin{lemma}
For any $0\leq i \leq j$,
\begin{align}\label{f25.6}
    \E[H_j \mid  \mathcal{F}_i] &\leq \E[H_{j-i}],\\
    \P(S_j\leq x \mid  \mathcal{F}_i) &\leq \P(S_{j-i} \leq x).\label{f25.7}
\end{align}
\end{lemma}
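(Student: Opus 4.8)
The plan is to prove both inequalities simultaneously by exhibiting a coupling between the earthworm process restarted at time $i$ and a fresh earthworm process. The key structural observation is that the event $\{H_{m+1}=1\}$ at any time $m$ depends only on whether there is a hole in front of $X_m$ in the direction of the $(m+1)$st step, and — as already noted in the proof of Lemma \ref{f25.1} — this event is \emph{implied} by the event $B_m^{\text{dir}}$ that the walk has not visited any site in that direction since time $0$. More generally, whether a site $(x,y)$ is a hole at time $m$ can only be affected by the portion of the trajectory $X_0,\dots,X_m$; a site never visited is never a hole. So the idea is: run a second, independent copy of the model driven by the increments $X_{i+1}-X_i, X_{i+2}-X_{i+1},\dots$ but started afresh from the empty-soil configuration with a single hole at $X_i$. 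Call its hole-count process $\wt S_0,\wt S_1,\dots$ and its hole-creation indicators $\wt H_0,\wt H_1,\dots$; by the Markov property and translation invariance of simple random walk, $(\wt H_k)_{k\ge 0}$ has the same law as $(H_k)_{k\ge 0}$, and $\wt S_k$ has the same law as $S_k$.

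First I would establish the pathwise domination $H_{i+k}\le \wt H_k$ and $S_{i+k}-S_i\le \wt S_k - \wt S_0 = \wt S_k - 1$ for every $k\ge 0$, on the coupled probability space. The point is that in the real process at time $i+k-1$ the configuration $\calH_{i+k-1}$ contains \emph{all} the holes that the restarted process $\wt\calH_{k-1}$ has, \emph{plus possibly more} — more precisely, $\wt\calH_{k-1}$, when translated, is contained in the holes that would be seen by a worm that only remembered its path from time $i$ onward, and every such hole is also a hole in $\calH_{i+k-1}$ restricted to the relevant half-line. Consequently, whenever the real worm creates a new hole at step $i+k$ (meaning no hole lies ahead in the step direction among \emph{all} of $\calH_{i+k-1}$), \emph{a fortiori} no hole lies ahead among the smaller set $\wt\calH_{k-1}$, so $\wt H_k=1$ as well. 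This gives $H_{i+k}\le\wt H_k$; summing over $k=1,\dots,j-i$ gives $S_j - S_i \le \wt S_{j-i}-1$, hence $S_j \le \wt S_{j-i}-1+S_i$, and since $S_i\ge 1$ this needs a little care — actually the cleaner route is to observe directly that $S_j = S_i + \sum_{k=1}^{j-i}H_{i+k} \le S_i + \sum_{k=1}^{j-i}\wt H_k = S_i + (\wt S_{j-i}-1)$, and separately that the total count from scratch satisfies the right monotonicity. I will need to argue this containment claim carefully by induction on $k$: the inductive hypothesis is that, after translating by $-X_i$, the hole set of the restarted process is contained in the hole set of the real process intersected with the sites visited by the restarted walk — and one checks that a "create" or "transfer" step preserves this, because filling the nearest hole ahead in the real process removes a hole that is at least as close as (or equal to) the one removed in the restarted process.

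Once the pathwise bounds hold on the coupling, both conclusions follow by taking expectations and using that $\wt H_k\overset{d}{=}H_k$ and $\wt S_k\overset{d}{=}S_k$: for \eqref{f25.6}, $\E[H_j\mid\calF_i] = \E[H_{i+k}\mid\calF_i]\le\E[\wt H_k\mid\calF_i] = \E[\wt H_k]=\E[H_{j-i}]$ with $k=j-i$ (using that $\wt H_k$ is independent of $\calF_i$); for \eqref{f25.7}, conditionally on $\calF_i$ the real process has $S_j \le S_i + (\wt S_{j-i}-1)$ with $S_i\ge 1$ $\calF_i$-measurable, so $\{S_j\le x\}\subseteq\{\wt S_{j-i} \le x - S_i + 1\}\subseteq\{\wt S_{j-i}\le x\}$, and taking conditional probability gives $\P(S_j\le x\mid\calF_i)\le\P(\wt S_{j-i}\le x\mid\calF_i)=\P(S_{j-i}\le x)$.

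The main obstacle is the inductive containment claim — formalizing precisely in what sense "the real process has all the holes the restarted one has, and then some" and checking it is preserved under the transfer dynamics, where the worm deletes the \emph{nearest} hole ahead. One must be sure that a larger ambient hole set only ever leads to deleting a hole that is weakly closer, never strictly farther, so that no hole of the restarted process gets "orphaned." I expect this to be the only genuinely delicate point; the rest is bookkeeping with the Markov property and distributional equality under the coupling.
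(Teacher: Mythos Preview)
Your coupling is exactly the paper's: define a restarted process on the same increments after time $i$ with a fresh single-hole configuration, prove by induction that its hole set is contained in the real hole set, and deduce $H_{i+k}\le \wt H_k$. For \eqref{f25.6} this is correct and matches the paper verbatim.

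There is, however, a genuine gap in your derivation of \eqref{f25.7}. From $H_{i+k}\le \wt H_k$ you obtain $S_j \le S_i + (\wt S_{j-i}-1)$, which bounds $S_j$ from \emph{above}. But \eqref{f25.7} requires the opposite direction: you need $\{S_j\le x\}\subseteq\{\wt S_{j-i}\le x\}$, i.e.\ $S_j\ge \wt S_{j-i}$. Your chain of inclusions $\{S_j\le x\}\subseteq\{\wt S_{j-i}\le x-S_i+1\}$ is simply false: from $S_j \le S_i + (\wt S_{j-i}-1)$ one deduces $\wt S_{j-i}\ge S_j-S_i+1$, a \emph{lower} bound on $\wt S_{j-i}$, which tells you nothing about $\{\wt S_{j-i}\le\cdot\}$.

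The fix is already in your hands. You have the set containment $\wt\calH_k\subseteq\calH_{i+k}$ (after translation) for every $k$; taking cardinalities at $k=j-i$ gives directly $\wt S_{j-i}=|\wt\calH_{j-i}|\le|\calH_j|=S_j$, and hence $\{S_j\le x\}\subseteq\{\wt S_{j-i}\le x\}$. Taking conditional probability on $\calF_i$ and using independence of $\wt S_{j-i}$ from $\calF_i$ finishes the proof. This is precisely how the paper argues: it uses the containment of hole sets, not the increment-wise inequality $H_{i+k}\le\wt H_k$, to get \eqref{f25.7}.
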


\begin{proof}
Fix $j\geq 0$ and $0\leq i \leq j$. 
We define $\{(X'_k,\calH'_k), k\geq i\}$ by setting $X_k' = X_k$ for all $k\geq i$ and $\calH'_i = \{X_i\} \subseteq \mathcal{H}_i$. 
The dynamics of $\calH'_k$, i.e., the mechanism of creating new holes,  is the same as for the original model. In other words, all holes created by $X$ before time $i$ have been erased but otherwise, $X'$ follows the same trajectory as that of $X$.

Let $H'_k $ be the indicator of the event that $X'$ creates a new hole at time $k$ for $k>i$. Let $H'_i = 1$.
 Note that
 $H'_k \in \mathcal{F}_k$ for $k\geq i$.
 As a consequence of the erasure of the initial holes created by $X$, the time shift, and the Markov property, we obtain
\begin{align}\label{f25.9}
    \E[H'_j \mid  \mathcal{F}_i] = \E[H_{j-i}].
\end{align}

It requires a moment's thought but it is totally elementary to see that  if $\mathcal{H}'_k \subseteq \mathcal{H}_k$ then $\mathcal{H}'_{k+1} \subseteq \mathcal{H}_{k+1}$, for $k\geq i$, since the creation of new holes is governed in both cases by identical steps of the two earthworms. 
Recall that $\calH'_i  \subseteq \mathcal{H}_i$. By induction, $\mathcal{H}'_k \subseteq \mathcal{H}_k$ for all $k\geq i$. Just before the $j$th step, $X$ and $X'$ are at the same location, and they will go in the same direction at step $j$. Since $\mathcal{H}'_{j-1} \subseteq \mathcal{H}_{j-1}$, if $X$ does not have a hole in the direction of the next step, $X'$ also does not have a hole in the direction of the next step. Therefore, if $H_j=1$ then $H'_j=1$. Hence, $ H_j\leq H_j'$.
Taking conditional expectation with respect to $\mathcal{F}_i$ and using \eqref{f25.9}, we obtain 
\begin{align*}
    \E[H_j\mid \mathcal{F}_i]\leq \E[H_j'\mid \mathcal{F}_i] = \E[H_{j-i}].
\end{align*}
This proves \eqref{f25.6}.\\

To prove \eqref{f25.7}, we denote $S'_n = \sum_{k=i}^{n} H'_k = |\mathcal{H}'_n|$. The following formula holds for the same reasons as those for \eqref{f25.9},
\begin{align*}
    \P(S'_j\leq x\mid \mathcal{F}_i) = \P(S_{j-i}\leq x).
\end{align*}
Since $\mathcal{H}'_j \subseteq \mathcal{H}_j$, we have $S_j' = |\mathcal{H}'_j| \leq |\mathcal{H}_j| = S_j$. Therefore, taking conditional expectation with respect to $\mathcal{F}_i$, we get
\begin{align*}
    \P(S_j\leq x \mid  \mathcal{F}_i) \leq \P(S'_j\leq x \mid  \mathcal{F}_i) = \P(S_{j-i}\leq x).
\end{align*}
\end{proof}

Given \eqref{f25.6}, we can bound the variance of $S_n$ by expanding it into the sum of $H_i$'s. Then we will use the Paley–Zygmund inequality to derive a lower bound for $S_n$.

\begin{lemma}\label{f25.10}
For all $n\geq 0$ and $0<\theta<1$,
\begin{align*}
    \P(S_n > \theta \E[S_n]) \geq \frac{(1-\theta)^2}{2}.
\end{align*}
\end{lemma}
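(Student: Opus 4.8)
The plan is to apply the Paley--Zygmund inequality to the nonnegative random variable $S_n$, which reduces the whole statement to the single second-moment estimate $\E[S_n^2]\le 2\,\E[S_n]^2$. Recall that Paley--Zygmund says: if $Z\ge 0$ with $0<\E[Z]<\infty$ and $0<\theta<1$, then $\P(Z>\theta\,\E[Z])\ge (1-\theta)^2\,\E[Z]^2/\E[Z^2]$. Here $\E[S_n]\ge \E[H_0]=1>0$, so the hypothesis is met, and once we know $\E[S_n^2]\le 2\,\E[S_n]^2$ we get $\E[S_n]^2/\E[S_n^2]\ge 1/2$ and hence $\P(S_n>\theta\,\E[S_n])\ge (1-\theta)^2/2$, which is exactly the claim.

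To prove the second-moment bound, write $S_n=\sum_{i=0}^n H_i$; since each $H_i\in\{0,1\}$ we have $H_i^2=H_i$, so
\[
  \E[S_n^2]=\sum_{i=0}^n \E[H_i] + 2\sum_{0\le i<j\le n}\E[H_iH_j].
\]
For $i\le j$, condition on $\mathcal{F}_i$ (on which $H_i$ is measurable) and use \eqref{f25.6}:
\[
  \E[H_iH_j]=\E\!\big[H_i\,\E[H_j\mid\mathcal{F}_i]\big]\le \E[H_i]\,\E[H_{j-i}].
\]
Set $a_k=\E[H_k]$ and $T_k=\sum_{i=0}^k a_i=\E[S_k]$; note $a_0=1$. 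Substituting $m=j-i$ and bounding $T_{n-m}\le T_n$ (partial sums of nonnegative terms),
\[
  \sum_{0\le i<j\le n}\E[H_iH_j]\le \sum_{m=1}^n a_m\sum_{i=0}^{n-m}a_i=\sum_{m=1}^n a_m\,T_{n-m}\le T_n\sum_{m=1}^n a_m = T_n(T_n-1).
\]
Therefore $\E[S_n^2]\le T_n + 2T_n(T_n-1)=2T_n^2-T_n\le 2\,\E[S_n]^2$, which is what we needed.

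There is no genuinely hard step: the whole argument rests on the submultiplicative estimate \eqref{f25.6}, which decouples $H_i$ and $H_j$ up to a time shift, after which the combinatorics is a one-line index substitution. The only points to watch are that the term $H_0=1$ guarantees $\E[S_n]\ge 1$, so that the Paley--Zygmund ratio is well defined, and that the discarded slack $-T_n$ in $2T_n^2-T_n$ is harmless. (Equivalently one could phrase the estimate as $\var(S_n)\le \E[S_n]^2$ and then invoke Paley--Zygmund, but the direct computation above is the cleanest route.)
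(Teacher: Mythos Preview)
Your proof is correct and follows the same route as the paper: use the correlation inequality \eqref{f25.6} to obtain $\E[H_iH_j]\le \E[H_i]\E[H_{j-i}]$, deduce $\E[S_n^2]\le 2\,\E[S_n]^2$, and then apply Paley--Zygmund. Your handling of the double sum (separating the diagonal and substituting $m=j-i$) is a minor bookkeeping variant of the paper's computation and is in fact slightly tighter, yielding $2T_n^2-T_n$ before discarding the slack.
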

\begin{proof}
By \eqref{f25.6}, for any $0\leq i\leq j$,
\begin{align*}
    \E[H_iH_j \mid  \mathcal{F}_i] 
    &= H_i\E[H_j\mid  \mathcal{F}_i] \leq H_i\E[H_{j-i}].
\end{align*}
Taking expectation on both sides, we get
\begin{align*}
    \E[H_iH_j] \leq \E[H_i]\E[H_{j-i}] .
\end{align*}
Therefore,
\begin{align*}
    \E[S_n^2] 
    &= \E\left[\left(\sum_{i=0}^{n} H_i\right)^2\right]
    \leq 2\sum_{i=0}^{n}\sum_{j=i}^{n} \E[H_iH_j]
    \leq 2\sum_{i=0}^{n}\sum_{j=i}^{i+n} \E[H_iH_j]\\
    &\leq 2\sum_{i=0}^{n}\sum_{j=0}^{n} \E[H_i]\E[H_j]
    = 2(\E[S_n])^2.
\end{align*}
By the Paley–Zygmund inequality
(see \cite[Sect. 5.1, Cor. 5]{FG} or \cite[(13)]{Petr}), for any $0<\theta<1$,
\begin{align*}
    \P(S_n > \theta \E[S_n]) \geq (1-\theta)^2\frac{\E[S_n]^2}{\E[S_n^2]} \geq \frac{(1-\theta)^2}{2}.
\end{align*}
\end{proof}

By combining Lemmas \ref{f25.1} and \ref{f25.10} we see that, with positive probability,  $S_n$ is at least of the order of $\E[S_n]$, which is at least of the order of $n^{3/4}$. We will improve this result and show that the probability can be arbitrarily close to 1 using  \eqref{f25.7}.

\begin{lemma}\label{f26.1}
    For every $\eps>0$ there exist $\delta>0$ and $n_0$ such that for  $n\geq n_0$,
\begin{align*}
    \P(S_n\leq \delta \E[S_n]) \leq \eps.
\end{align*}
\end{lemma}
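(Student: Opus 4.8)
The plan is to amplify the constant‑probability estimate coming from Lemmas \ref{f25.1} and \ref{f25.10} into a bound arbitrarily close to $1$, by displaying $S_n$ as at least the maximum of roughly $m$ nearly independent copies of $S_{\lfloor n/m\rfloor}$ and then taking $m$ large. Fix an integer $m\ge 2$, put $\ell=\lfloor n/m\rfloor$, and set $t_k=k\ell$ for $0\le k\le m-1$ and $t_m=n$, so that each block $(t_k,t_{k+1}]$ has length at least $\ell$. For each $k$ I would run the construction used in the proof of \eqref{f25.7}: let $(X_j,\calH^{(k)}_j)_{j\ge t_k}$ be the earthworm following the same walk $X$ but with its hole set reset at time $t_k$ to $\calH^{(k)}_{t_k}=\{X_{t_k}\}$. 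As shown there, $\calH^{(k)}_j\subseteq\calH_j$ for all $j\ge t_k$, and the erasure/time‑shift/Markov argument behind \eqref{f25.9} yields, for every $x$,
\begin{align*}
    \P\bigl(|\calH^{(k)}_{t_{k+1}}|\le x \,\big|\, \calF_{t_k}\bigr)=\P\bigl(S_{t_{k+1}-t_k}\le x\bigr).
\end{align*}
Write $Z_k:=|\calH^{(k)}_{t_{k+1}}|$. Because the number of holes never decreases in time and $\calH^{(k)}_{t_{k+1}}\subseteq\calH_{t_{k+1}}$, we get $Z_k\le|\calH_{t_{k+1}}|=S_{t_{k+1}}\le S_n$, so that $S_n\ge\max_{0\le k\le m-1}Z_k$.

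The advantage of the maximum is that $Z_k$ is $\calF_{t_{k+1}}$‑measurable, hence $\{Z_k\le x\}\in\calF_{t_{k'}}$ whenever $k<k'$. Conditioning in turn on $\calF_{t_{m-1}},\calF_{t_{m-2}},\dots,\calF_{t_1}$ and applying the displayed identity at each step gives
\begin{align*}
    \P(S_n\le x)\le\P\Bigl(\bigcap_{k=0}^{m-1}\{Z_k\le x\}\Bigr)=\prod_{k=0}^{m-1}\P\bigl(S_{t_{k+1}-t_k}\le x\bigr)\le\P\bigl(S_{\ell}\le x\bigr)^m,
\end{align*}
the last step using that every block has length at least $\ell$ and that $t\mapsto\P(S_t\le x)$ is nonincreasing (since $S_t$ is nondecreasing in $t$). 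To choose the threshold I would note that \eqref{f25.6} gives the subadditivity $\E[S_{a+b}]\le\E[S_a]+\E[S_b]$, whence $\E[S_n]\le C_m\,\E[S_\ell]$ for a constant $C_m$ depending only on $m$, valid for all large $n$ (Lemma \ref{f25.1} forces $\E[S_\ell]\to\infty$). Then with $\delta:=1/(2C_m)$ and $x:=\delta\E[S_n]$ one has $x\le\tfrac12\E[S_\ell]$, so Lemma \ref{f25.10} with $\theta=\tfrac12$ gives $\P(S_\ell\le x)\le\P\bigl(S_\ell\le\tfrac12\E[S_\ell]\bigr)\le 7/8$, and therefore $\P(S_n\le\delta\E[S_n])\le(7/8)^m$ for all $n\ge n_0(m)$. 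Given $\eps>0$, pick $m$ with $(7/8)^m\le\eps$; the corresponding $\delta=1/(2C_m)$ and $n_0=n_0(m)$ prove the lemma.

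The one genuinely delicate point is the first step — deciding what to compare $S_n$ with. The natural move, splitting $S_n$ into the new holes created in each block $(t_k,t_{k+1}]$, fails: the earthworm restarted at $t_k$ carries fewer holes than the real one and therefore creates new holes at least as often, so the restart inequalities bound block increments from above, not below. The way around this is to abandon the additive decomposition and instead use monotonicity of $|\calH^{(k)}_\cdot|$ in time to obtain $S_n\ge|\calH^{(k)}_{t_{k+1}}|$ separately for each $k$, and then to turn the resulting maximum into an honest product by exploiting that the $\sigma$‑fields $\calF_{t_0}\subseteq\calF_{t_1}\subseteq\cdots$ are nested. Everything after that — the subadditivity of $\E[S_n]$, the choice of $\theta$, and the floor bookkeeping — is routine.
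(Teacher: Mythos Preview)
Your proof is correct and follows essentially the same approach as the paper: partition $[0,n]$ into $m$ blocks, use the restart/monotonicity construction behind \eqref{f25.7} to bound each block by an independent copy of $S_\ell$, apply Lemma~\ref{f25.10} with $\theta=1/2$ to get a factor $7/8$ per block, and multiply. The only cosmetic differences are that the paper cites \eqref{f25.7} directly on the nested events $A_i=\{S_{t_i}\le \tfrac12\E[S_{n/m}]\}$ rather than naming the restarted quantities $Z_k$, and it compares $\E[S_n]$ with $\E[S_{n/m}]$ via the monotonicity $\E[H_{k+1}]\le\E[H_k]$ instead of the equivalent subadditivity $\E[S_{a+b}]\le\E[S_a]+\E[S_b]$ you derive from \eqref{f25.6}.
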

\begin{proof}
We start with a heuristic outline of the proof.
We will subdivide the interval $[0,n]$ into $m$ equally long subintervals with endpoints $t_i = i(n/m)$, $0\leq i\leq m$,
where $m\geq 1$ will be determined later.
In the main part of the proof, we will assume that $n$ is divisible by $m$ so that $t_i$'s are integers.
We will use \eqref{f25.7} to show that 
 $S_{t_i}$ is larger than $\delta \E[S_n]$, where $\delta$ dependents on $m$,
 with probability bounded below no matter whether the events $\{S_{t_j}\geq \delta \E[S_n]\}$ occurred or not
 for $j<i$. This implies that the probability $\{S_{t_m}\geq \delta \E[S_n]\}$ goes to 1 exponentially fast as $m$ increases.\\

Fix an arbitrary $\eps>0$.
To start the rigorous proof, we apply expectation to both sides of \eqref{f25.6} and take $i=1$ and $j=k+1$
to obtain  $\E[H_k]\geq \E[H_{k+1}]$ for all $k\geq 0$. This and the linearity of expectation imply that 
$\E[S_{n/m}] \geq \E[S_n]/m$ for any positive integer $m$. Taking $\delta = 1/(2m)$, we get
\begin{align*}
    \P(S_n\leq \delta \E[S_n]) \leq \P(S_n\leq \E[S_{n/m}]/2).
\end{align*}
Hence, it will suffice to find an $m$ such that $\P(S_n\leq \E[S_{n/m}]/2) \leq \eps$. 

Recall that $t_i = i(n/m)$ for $0\leq i\leq m$, and let
\begin{align*}
    A_i = \{S_{t_i} \leq \E[S_{n/m}]/2\} \in \mathcal{F}_{t_i}
\end{align*}
for $0\leq i \leq m$. Note that $A_m=\{S_n\leq \E[S_{n/m}]/2\}$ so we only need to prove that $\P(A_m) \leq \eps$. 

We have $S_0 =1$.
It is easy to see that $\P(S_n =2)>0$ for all $n\geq 1$. 
Lemma \ref{f25.1} implies that for any fixed $m$ there exists $n_1$ such that for $n\geq n_1$,
$\E[S_{n/m}]/2 \geq 2$. This implies that $\P(A_i)>0$ for all $0\leq i \leq m$ if $n\geq n_1$.

Note that $A_{i}\subseteq A_{i-1}$ for all $1\leq i \leq m$ since $S_{t_i}$ is nondecreasing. Therefore,
\begin{align}\label{f26.5}
    \P(A_m) = \prod_{i=1}^{m} \P(A_i\mid A_{i-1}).
\end{align}
For any $1\leq i \leq m$, since $A_{i-1} \in \mathcal{F}_{t_{i-1}}$, \eqref{f25.7} implies that
\begin{align}\label{f26.6}
    \P(A_i \mid A_{i-1}) &=
    \P(S_{t_i} \leq \E[S_{n/m}]/2  \mid A_{i-1}) \leq \P(S_{t_1} \leq \E[S_{n/m}]/2  ) 
    = \P(A_1).
\end{align}
By Lemma \ref{f26.1} with $\theta = 1/2$, we have $\P(A_1)\leq 1-1/8 = 7/8$. This, \eqref{f26.5} and \eqref{f26.6} imply that
\begin{align*}
    \P(A_m) \leq \prod_{i=1}^{m} \P(A_1) \leq \left(7/8\right)^{m}.
\end{align*}
We now take $m> \log (\eps)/\log(7/8) $ and $\delta<1/(2m)$ to obtain  $\P(A_m) \leq \eps$.
This yields $\P(S_n\leq \delta \E[S_n]) \leq \eps$ for $n\geq n_1(m)$ that are divisible by $m$.

We pointed out earlier in the proof that  $\E[H_k]\geq \E[H_{k+1}]$ for all $k\geq 0$. This implies that 
$(n/(n-k))\E[S_{n-k}] \geq \E[S_n]$ for  $0\leq k\leq n$. Consider any $n\geq n_1(m)+m$ and let $0\leq k\leq m-1$ be such that
$n-k$ is divisible by $m$. Let $\delta' = \delta n_1(m)/(n_1(m)+m)$ so that $\delta' (n/(n-k)) \leq \delta $ for all $n\geq n_1(m)+m$ and  $0\leq k\leq m-1$. Then
\begin{align*}
    \P(S_n\leq \delta' \E[S_n]) &\leq \P(S_{n-k}\leq \delta' \E[S_n])
    \leq \P(S_{n-k}\leq \delta' (n/(n-k))\E[S_{n-k}])\\
    &\leq \P(S_{n-k}\leq \delta \E[S_{n-k}])
    \leq \eps.
\end{align*}
This proves the lemma with $\delta'$ in place of $\delta$.
\end{proof}

\begin{proof}[Proof of Theorem \ref{f24.1}]
Fix any $\eps>0$.
By Lemma \ref{f26.1} we can find
 $\delta_1>0$ such that $\P(S_n\geq \delta_1 \E[S_n]) \geq 1- \eps$ for sufficiently large $n$. 
 Lemma \ref{f25.1} implies that there exists  $c_2>0$ such that  $\E[S_n] \geq c_2 n^{3/4}$ for sufficiently large $n$. Hence
 for $\delta=c_2 \delta_1$,
\begin{align*}
    \liminf_{n\to \infty}\P(S_n \geq \delta n^{3/4}) \geq \liminf_{n\to\infty} \P(S_n\geq (\delta/c_2) \E[S_n])
    =\liminf_{n\to\infty} \P(S_n\geq \delta_1 \E[S_n]) \geq 1- \eps.
\end{align*}
\end{proof}

\begin{remark}
Our proof applies verbatim to the analogous model in dimensions higher than 2 but the model and the main result seem to be less interesting in higher dimensions. The crucial estimate is \eqref{m4.1}. In dimension 3,
by \cite[(2.35)]{Lawler} there exists a constant $c_*>0$ such that
\begin{align*}%\label{m4.1}
    \liminf_{n\rightarrow \infty} (\log n)^{1/2}\E[\bone(C_n^{\text{right}})] > c_*.
\end{align*}
Simple random walk is transient in dimensions higher than 2, so for dimensions 4 and higher,
\begin{align*}%\label{m4.1}
    \liminf_{n\rightarrow \infty} \E[\bone(C_n^{\text{right}})] > c_*.
\end{align*}
Therefore, Theorem \ref{f24.1} can be extended to higher dimensions $d$ as follows.
For every $\eps>0$ there exists $\delta>0$ such that
\begin{align*}
   & \liminf_{n\to\infty}\P(S_n \geq \delta n(\log n)^{-1/2}) \geq 1-\eps,
\qquad  d=3,\\
& \liminf_{n\to\infty}\P(S_n \geq \delta n) \geq 1-\eps,
\qquad  d\geq 4 .
\end{align*}

\end{remark}

\section{Open problems and conjectures}\label{open}

The earthworm model 
is very simple but it seems to be rather hard to analyze. We will present some
simulation results and open problems and conjectures based on the simulations.

\subsection{Dimension of the set of holes}

Our simulations of the set of holes created by the earthworm suggest that $S_n \sim n^\alpha$ with $\alpha \approx 0.79$.
This is consistent with the simulation results in \cite{burdzy2013fractal,wxml}. 
Fig. \ref{fig1} shows the results of simulations of $S_n$ for $n = 10^4, 3\cdot 10^4, 10^5, 3\cdot 10^5, 10^6, 3\cdot 10^6, 10^7$. For each $n$, we generated 10 i.i.d. samples of $S_n$ and calculated their means $\ol \mu_n$ (estimates of $\E S_n$). 
The regression line for $(\log(\ol \mu_n), \log (n))$ is $y=0.79x+0.06$.
Fig. \ref{fig1} shows the plot of the values of $(\log(\ol \mu_n), \log (n))$ and the regression line.
\begin{conjecture}\label{con34}
$\liminf_{n\to\infty} \log(\E[S_n]) /\log(n) > 3/4$.
\end{conjecture}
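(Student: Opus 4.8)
The plan is to reduce Conjecture \ref{con34} to a strictly improved version of the one-step estimate behind Lemma \ref{f25.1}, and then to try to harvest that improvement from a finer analysis of how holes get destroyed.

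\textbf{Step 1: reduction to a per-step bound.} Since $\E[H_k]\ge\E[H_{k+1}]$ for every $k$ (shown in the proof of Lemma \ref{f26.1}) and $\E[S_n]=\sum_{i=0}^n\E[H_i]$, a routine integral comparison shows that the conjecture follows once we exhibit a constant $c>0$ such that $\E[H_n]\ge n^{-1/4+c}$ for all large $n$. By the first observation in the proof of Lemma \ref{f25.1}, $\{H_{n+1}=1\}$ is \emph{exactly} the event that there is no hole in the direction of the step at time $n$; conditioning on $\calF_n$ and using the symmetry of the walk as in the derivation of \eqref{f25.4}, this gives the identity $\E[H_{n+1}]=\P(D_n)$, where
\begin{align*}
  D_n:=\{(x,y_n)\notin\calH_n\ \text{for every}\ x>x_n\}
\end{align*}
is the event that no hole sits strictly to the right of $X_n$ on the horizontal line through $X_n$. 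Lemma \ref{f25.1} only bounds $\P(D_n)$ below by $\P(B_n^{\text{right}})$ via the inclusion $B_n^{\text{right}}\subseteq D_n$ (an unvisited site cannot be a hole), and $\P(B_n^{\text{right}})=\P(C_n^{\text{right}})$ is of order exactly $n^{-1/4}$ by \eqref{m4.1} and the matching upper bound in \cite[(2.35)]{Lawler}. So the whole question is whether the walk ``clears its own right ray after entering it'' polynomially more often than it avoids that ray altogether:
\begin{align*}
  \P\bigl(D_n\setminus B_n^{\text{right}}\bigr)\ \ge\ n^{-1/4+c}.
\end{align*}

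\textbf{Step 2: classifying the ways a ray gets cleared.} On $D_n\setminus B_n^{\text{right}}$ the walk does enter $R_n:=\{(x,y_n):x>x_n\}$; every visited site of $R_n$ becomes a hole at its first visit, and all of these holes have vanished by time $n$. A hole disappears only through a transfer, and the geometry of the transfer rule splits the cases: a \emph{horizontal} transfer along the line $y=y_n$ merely slides a ray-hole along that line, and can remove it from $R_n$ only by re-creating it at a site with $x$-coordinate $\le x_n$; a \emph{vertical} transfer through a site $(x^*,y_n)\in R_n$ deposits its hole onto a neighbouring horizontal line, removing it from $R_n$ at once. Thus $R_n$ is cleared precisely when, for each column $x^*>x_n$ that the walk meets on the line $y=y_n$, the net effect of the vertical crossings of $y=y_n$ at $x^*$ and of the intervening horizontal sweeps leaves $(x^*,y_n)$ filled at time $n$. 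I would decompose $D_n\setminus B_n^{\text{right}}$ according to this finite combinatorial data---the visited columns of $R_n$, the ordered sequence of vertical crossings at each of them, and the times of the relevant horizontal sweeps---estimate the probability of each cell by the Markov property together with a local central limit theorem and Beurling-type exit bounds in the spirit of \eqref{m4.1}, and sum over cells.

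\textbf{Step 3: the main obstacle.} Two features make Step 2 hard, and the second is, in our view, the heart of the conjecture. First, the transfer rule is non-local: since it is the \emph{nearest} hole in the direction of the step that gets filled, the fate of the hole at $(x^*,y_n)$ depends on which sites between $x_n$ and $x^*$ are holes at the relevant times, so the analysis couples distant portions of the line $y=y_n$ and the earthworm's crossings of it at widely separated times---exactly the kind of global, path-dependent bookkeeping that has so far resisted rigorous treatment. Second, and more seriously, even granted a clean classification and sharp per-cell estimates it is not clear a priori that the resulting sum beats $n^{-1/4}$ by a genuine polynomial factor: the simulated gain ($n^{0.79}$ against $n^{3/4}$) is small, so one needs a real quantitative statement to the effect that the earthworm sweeps its own right ray clean anomalously often, and neither the Beurling estimate nor the second-moment method of this paper supplies such an input. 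It would help to have a monotone coupling of $\calH_n$ with the hole set of a simplified process---say, one in which transfers are replaced by a purely local rule---whose hole count is already known to be of order $n^{3/4+c}$; but no such comparison seems available, and the couplings used here (the inclusion $B_n^{\text{right}}\subseteq D_n$ and the hole-erasure coupling behind \eqref{f25.7}) dominate $\E[S_n]$ from below only by order $n^{3/4}$.
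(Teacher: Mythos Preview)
The statement you are trying to prove is Conjecture~\ref{con34}, which the paper explicitly lists as an \emph{open problem}; the paper offers no proof, only simulation evidence (the regression exponent $0.79$ in Fig.~\ref{fig1}). So there is no ``paper's own proof'' to compare your attempt against.

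Your proposal is also not a proof. Step~1 is a correct and clean reduction: the identity $\E[H_{n+1}]=\P(D_n)$ does hold by rotational symmetry of the law of $(X_n,\calH_n)$, and the conjecture would indeed follow from $\P(D_n)\ge n^{-1/4+c}$ for some $c>0$. But Step~2 is a sketch of a decomposition with no estimates attached, and in Step~3 you yourself identify why the plan stalls: the transfer rule is non-local, the bookkeeping couples distant crossings of the line $y=y_n$, and nothing in the toolkit of this paper (Beurling estimate~\eqref{m4.1}, the hole-erasure coupling behind~\eqref{f25.7}, Paley--Zygmund) yields a polynomial improvement over $n^{-1/4}$. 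That diagnosis is accurate, and it is precisely why the paper states this as a conjecture rather than a theorem. In short, your write-up is an honest explanation of why a natural approach does not go through, but it does not constitute a proof of Conjecture~\ref{con34}; the conjecture remains open.
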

The above conjecture is a mild version of the following apparently very hard problem.
\begin{problem}
    Prove existence and determine the value of $\lim_{n\to\infty} \log(\E[S_n]) /\log(n) $.
\end{problem}

\begin{figure}%[H]
  \centering
  \includegraphics[scale=0.6]{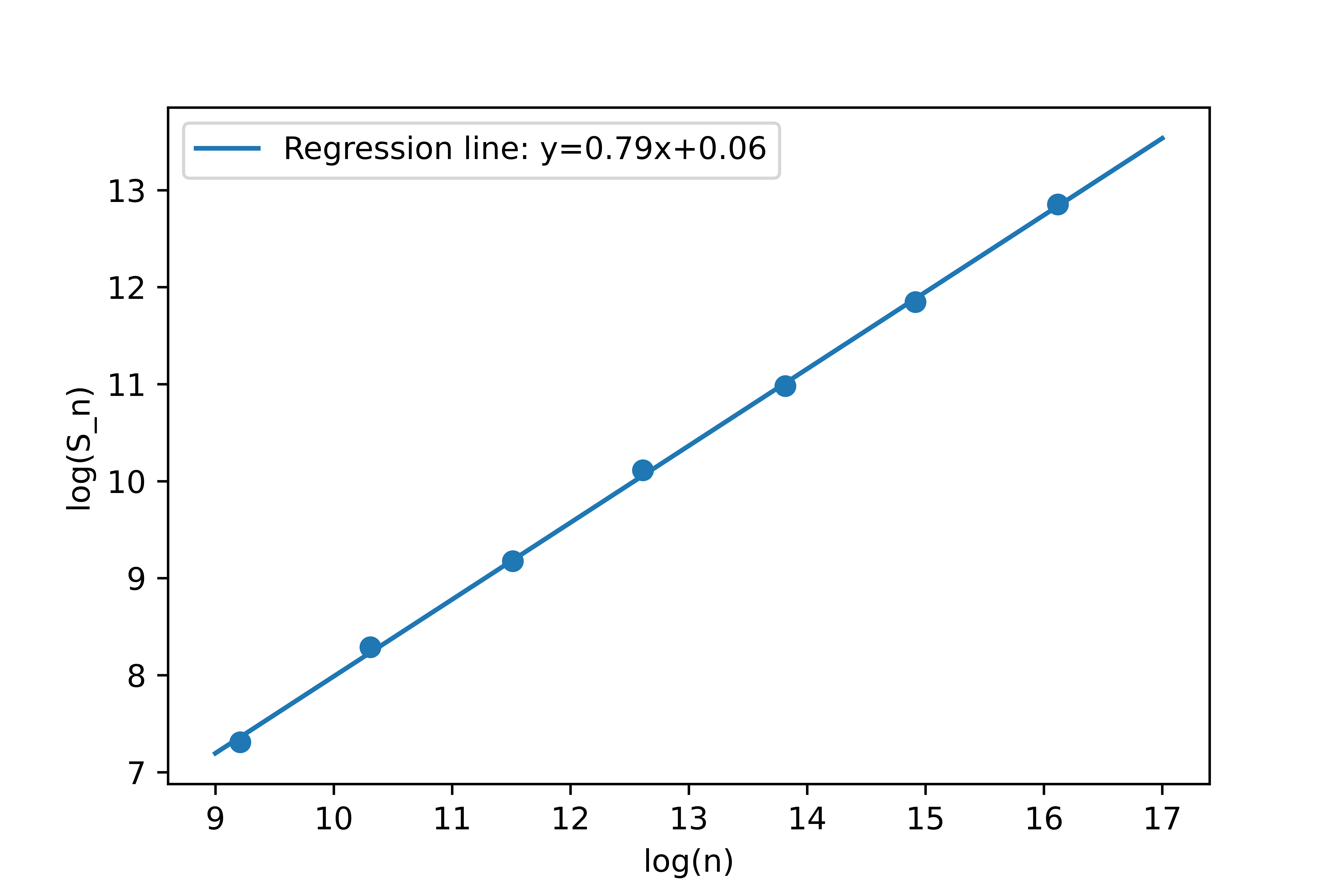}
  \caption{Plot of  $(\log(\ol \mu_n), \log (n))$ and the regression line $y=0.79x+0.06$.}
  \label{fig1}
\end{figure}

\subsection{Location of holes}

Fig. \ref{fig2} shows the simulation of hole locations, i.e., the set $\mathcal{H}_n$, after  $n = 10^8$ steps.

\begin{problem}
(i)
Find the distribution of sizes of connected components of $\mathcal{H}_n$.

(ii)
Find the distribution of sizes of connected components of the ``complement'' of $\mathcal{H}_n$
in the trail of the earthworm,
i.e., $\bigcup_{1\leq k\leq n} \{X_k\}\setminus \mathcal{H}_n$.
\end{problem}

\begin{figure}%[H]
  \centering
  \includegraphics[scale=0.11]{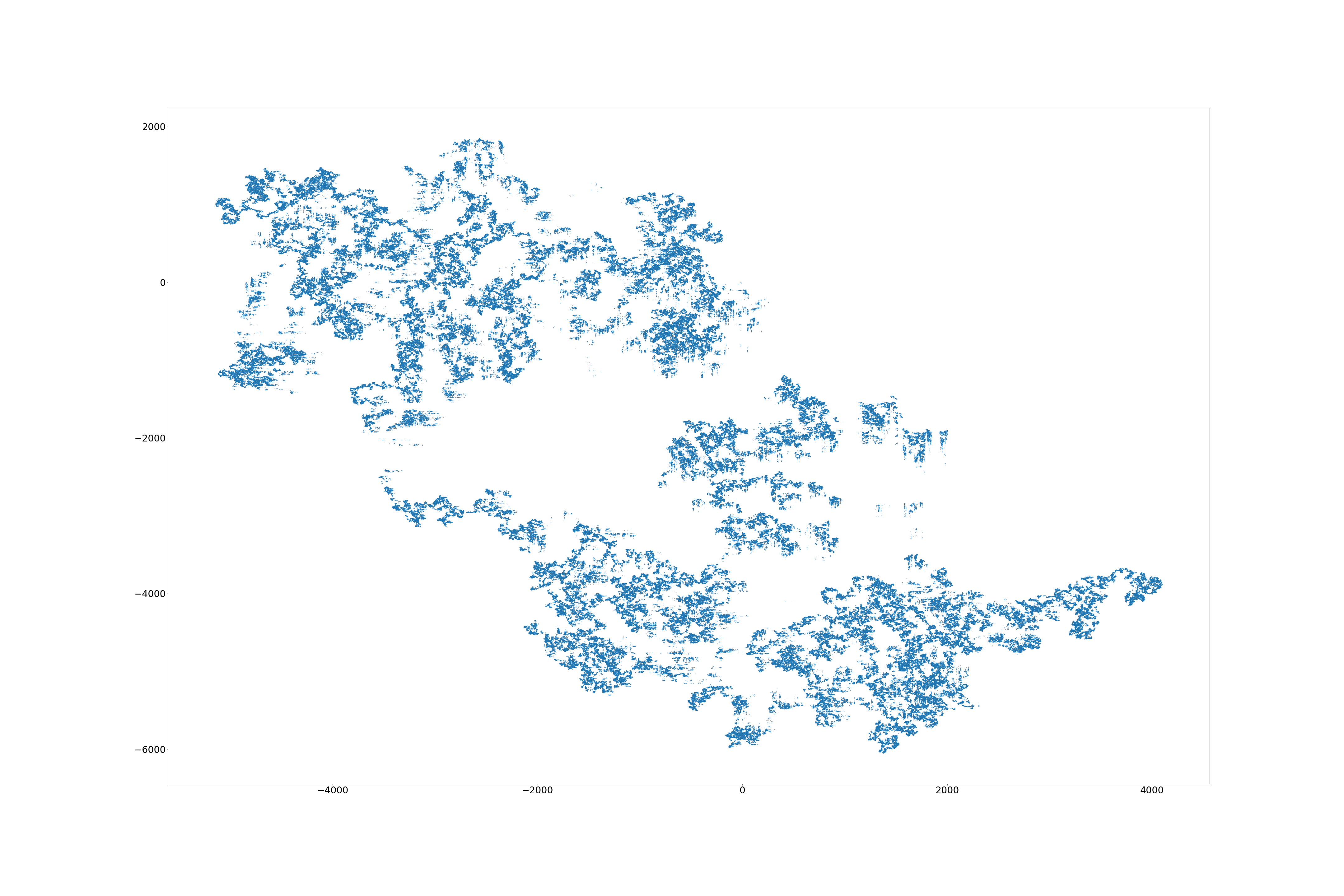}
  \caption{Locations of holes $\mathcal{H}_n$ after $n = 10^8$ steps.}
  \label{fig2}
\end{figure}

\subsection{Central Limit Theorem}

The following remarks on the CLT for $S_n$ are highly speculative in view of the fact that we even do not have  a good understanding of the mean of $S_n$. Nevertheless, we present our simulation results in Fig. \ref{fig3}.

We set $n=10,000$ and generated 2,000 i.i.d. samples of $S_n$. 
The histogram in Fig. \ref{fig3} suggests that CLT may hold for $S_n$.
The Kolmogorov–Smirnov test (see \cite{massey1951kolmogorov}) yields the statistic equal to $0.0214$ which gives the $p$-value equal to $0.315$, supporting the CLT conjecture.

\begin{figure}%[H]
  \centering
 \includegraphics[scale=0.5]{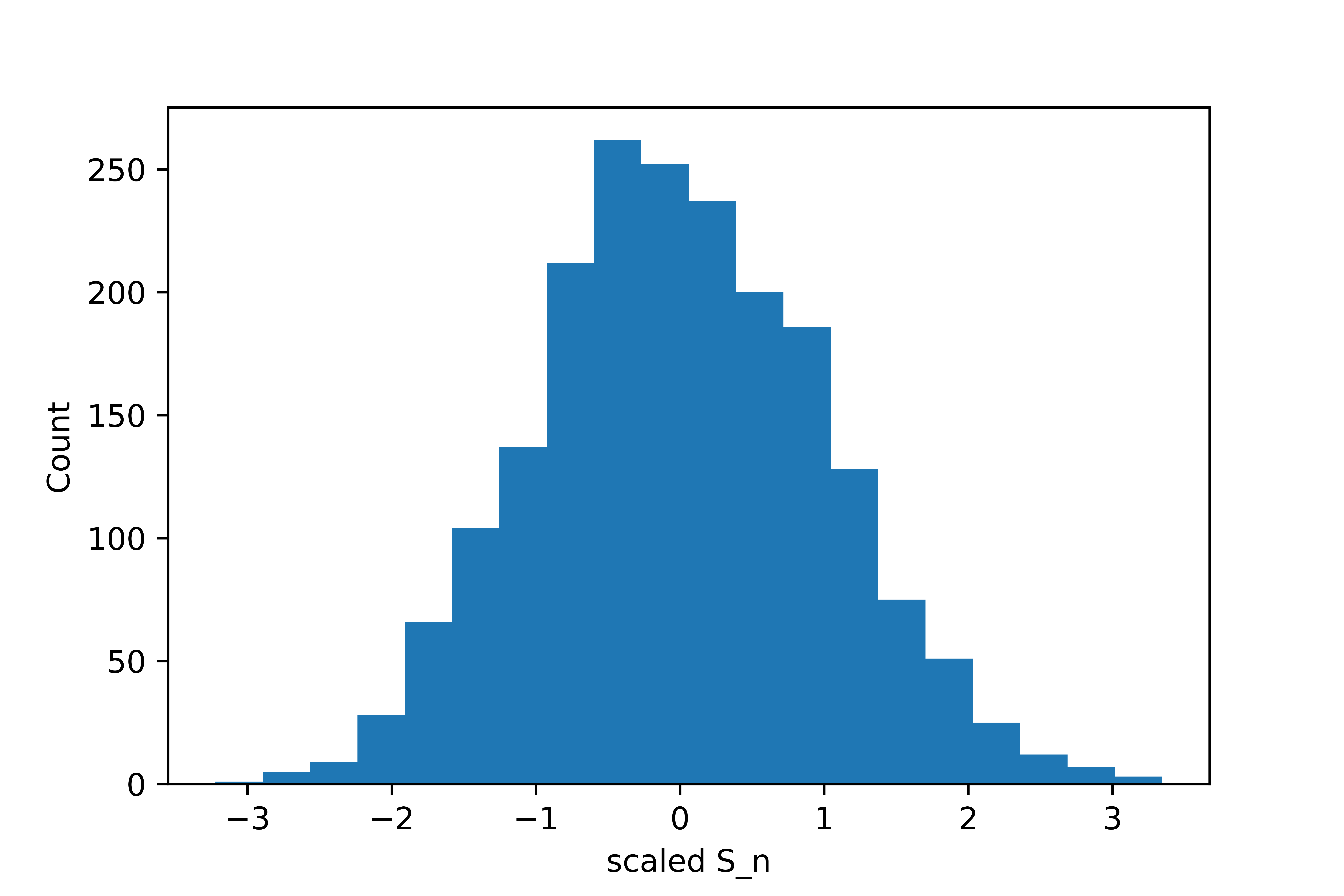}
  \caption{Normalized histogram of 2,000 i.i.d. samples of $S_n$ with $n=10,000$.}
  \label{fig3}
\end{figure}

\bibliographystyle{alpha}
%\bibliography{earthref}

\newcommand{\etalchar}[1]{$^{#1}$}

\end{document}